 \theoremstyle{plain}
  \theoremstyle{plain}
  \newtheorem*{thm*}{Theorem}
  \theoremstyle{plain}
  \theoremstyle{remark}
  \theoremstyle{definition}
 \theoremstyle{definition}
 \newtheorem*{defn*}{Definition}
  \theoremstyle{plain}
 \theoremstyle{definition}
  \theoremstyle{remark}
\newenvironment{keywords}{ \noindent\footnotesize\textbf{Keywords and phrases:}}{}
\newenvironment{class}{\noindent\footnotesize\textbf{Mathematics subject classification 2010:}}{}
\newcommand{\Abs}[1]{\left\lVert#1\right\rVert}
\newcommand{\R}{\mathbb{R}}
\newcommand*{\abs}[1]{\lvert#1\rvert}
\newcommand{\s}[1]{\mathcal{#1}}
\DeclareMathAccent{\Circ}{\mathalpha}{operators}{"17}
\theoremstyle{plain}
\newtheorem{Sa}[subsection]{Theorem}
\newtheorem{Sa*}[section]{Theorem}
\newtheorem{Le}[subsection]{Lemma}
\newtheorem{Le*}[section]{Lemma}
\newtheorem{Fo}[subsection]{Corollary}
\newtheorem{Fo*}[subsection]{Corollary}
\newtheorem{Prop*}[section]{Proposition}
\theoremstyle{definition}
\newtheorem*{Def}{Definition}
\newtheorem{Bei}[subsection]{Example}
\theoremstyle{remark}
\newtheorem{Bem}[subsection]{Remark}
 \numberwithin{equation}{section}
\DeclareMathOperator{\lin}{lin}
\DeclareMathOperator{\range}{range}
\newcommand{\N}{\mathbb{N}}
\newcommand{\eps}{\varepsilon}
\DeclareMathOperator{\1}{\chi}
\newcommand{\ben}{\begin{enumerate}[(i)]}
\newcommand{\een}{\end{enumerate}}
\renewcommand*{\epsilon}{\varepsilon}
\renewcommand*{\rho}{\varrho}
\begin{document}
\selectlanguage{english}%
\institut{Institut f\"ur Analysis}

\preprintnumber{MATH-AN-06-2013}

\preprinttitle{A note on causality in reflexive Banach spaces.}

\author{Marcus Waurick} 


\selectlanguage{american}%
\setcounter{section}{-1}

\date{}

\title{A note on causality in Banach spaces.}

\author{Marcus Waurick\\
Institut f\"ur Analysis, Fachrichtung Mathematik\\
Technische Universit\"at Dresden\\
Germany\\
marcus.waurick@tu-dresden.de }
\maketitle
\begin{abstract} In this note we provide examples that show that a common notion of causality for linear operators on Banach spaces does not carry over to the closure of the respective operators. We provide an alternative definition for causality, which is equivalent to the usual definition for closed linear operators but does carry over to the closure.
\end{abstract}
\begin{keywords} Causality, Evolutionary Equations, Resolution space
\end{keywords}

\begin{class} 46B99, 46N99, 47A99
\end{class}



\section{Introduction}

In physical processes there is a natural direction of time. This direction may be characterized by causality. When describing physical processes by means of mathematical models one thus needs a definition of this concept in mathematical terms. There are plenty of such in the literature, see e.g.~\cite{Saeks1970} and the references therein, see also \cite{Pagl2013} for causality concepts in the computer sciences and \cite{Jac2000} for a discrete-time analogue of causality. We start out with the definition of causality given in \cite{Saeks1970}, which can be understood as a (common) generalization of the concepts in \cite{Georgiou1993,Jac2000,Kal2013,Par2004,Picard,Trostorff2013,Thomas1997,Weiss2003} and, in a Banach space setting, \cite{Laksh10}. 

We note here that in particular situations, there are several characterizations or sufficient criteria of causal (and time-translation-invariant) linear operators at hand, see e.g.~\cite{Jac2000,Par2004,Picard,Trostorff2013}. In \cite{Jac2000,Thomas1997} and \cite[pp 49]{Par2004} the structure of time-translation invariant operators is exploited with the help of the $z$-transform (for a discrete-time setting) and the Laplace transform (for a continuous-time setting).

 In \cite[Example 6]{Jac2000} it has already been observed that the concepts of causality mentioned have the drawback that for (possibly unbounded) closable operators the operator itself may be causal, whereas its closure is not. In \cite[Example 6]{Jac2000} an example in a discrete-time setting is given, see also Example \ref{bei:time-shift2} below for an example in continuous-time.  We will present a possible definition of (norm-)\emph{strong causality} relying on a certain continuity property, which, for closed operators on reflexive Banach spaces, coincides with the usual notion of causality (Theorem \ref{thm:causal_cont_causal}), and which is stable under closure procedures (Lemma \ref{le:strongly_causal_is_equiv_strong_closure}). We shall note here that the latter issue was also adressed in \cite[Section 6]{Jac2000}. However, in \cite{Jac2000} the authors focus on the time-translation invariant case, which we will not assume in our considerations, see in particular \cite[Section 7]{Jac2000} for the continuous-time case. In Section \ref{sec:nonrefl}, we give a possible generalization to the non-reflexive setting (Theorem \ref{thm:caus_non-refl}). However, we have to restrict ourselves to the densely defined, continuous operator case. Moreover, we shall note here that the characterization for a linear, densely defined $M$ is rather technical, which may result in limited applicability. Therefore, the result should be read in the way that it is possible to define causality in terms of  continuity of a certain mapping \emph{independently} of the chosen core for $M$.

 We also mention that in \cite[Section 3]{WaurickIDAEP2013} we have used the notion of (norm-)strong causality in the analysis of solution operators of certain integro-differential-algebraic evolutionary problems of mathematical physics in the reflexive Banach space setting.

\section{The reflexive case}

We introduce the concept of a resolution space. 

\begin{Def}[{\cite{Saeks1970}}] Let $X$ be a Banach space, $(P_t)_{t\in \R}$ in $L(X)$ a \emph{resolution of the identity}, i.e., for all $t\in \R$ the operator $P_t$ is a projection, $\range(P_t)\subseteqq \range(P_s)$ if and only if $t\leqq s$ and $(P_t)_t$ converges in the weak operator topology to $0$ and $1$ if $t\to -\infty$ and $t\to\infty$, respectively. The pair $(X,(P_t)_t)$ is called \emph{resolution space}. 
\end{Def}

We remark here that the properties of the resolution are only to model the notion of causality. In fact, in the definition of causality, the only thing needed is that $P_t$ are continuous projections for all $t\in \R$. Moreover, we also do not need to assume that $(P_t)_t$ is directed in the above sense. We comment on this issue below. A particular instance of the resolution space is the following.

\begin{Bei} (a) Let $X\coloneqq L^2(\R)$ and let $P_t$ be given by multiplication with the cut-off function $\1_{\R_{< t}}$, $t\in\R$, i.e.,  $P_tf(x)= \1_{\R_{<t}}(x)f(x)$ for a.e.\ $x\in\R$ and $f\in L^2(\R)$.  Then $(X,(P_t)_t)$ is a resolution space, we call $(X,(P_t)_t)$ \emph{standard resolution (s.r.)}

(b) Let $X\coloneqq \ell^2(\N)$ and let $P_t$ be given by $(P_t(x_n)_n)_k \coloneqq x_k$ if $k\geqq t $ and $(P_t(x_n)_n)_k\coloneqq 0$ if $k<  t$, for all $(x_n)_n \in \ell^2(\N)$, $k\in \N$, $t\in\R$. In this way it is possible to treat the discrete-time case. 
\end{Bei}

Let us recall the concept of causality. 

 \begin{Def}[Causality, {\cite{Saeks1970}}] Let $(X,(P_t)_t)$ be a resolution space, $M\colon D(M)\subseteqq X\to X$. We say that $M$ is \emph{causal} (with respect to $(P_t)_t$) if for all $a\in \R$ and $f,g\in D(M)$ with $P_a(f-g)=0$ we have $P_a(M(f)-M(g))=0$. 
 \end{Def}

\begin{Bem}\label{rem:sec1,linm}
(a) If $M$ is linear, then $M$ is causal if and only if $P_af=0$ implies $P_aMf=0$ for all $a\in\R$.

(b) Under certain constraints on the domain of $M$, we can reformulate causality as follows. This characterizing property is given as a definition of causality in several works, see e.g.~\cite{Picard,Weiss2003,Thomas1997}. Assume that $P_a[D(M)]\subseteqq D(M)$ for all $a\in\R$. Then $M$ is causal if and only if for all $a\in \R$ we have $P_aM = P_aMP_a$. Indeed, if $M$ is causal, let $f\in D(M)$, $a\in\R$ and define $g\coloneqq P_af\in D(M)$. Then, obviously, $P_a(f-g)=0$. By causality we deduce that 
\[
 P_aM(P_af) = P_aM(g)=P_aM(f).
\]For the converse, let $f,g\in D(M)$, $a\in \R$ with $P_a(f-g)=0$. Then we get that
\[
  P_a \left( M(f)-M(g)\right)= P_a\left( M(P_af)-M(P_ag)\right)=0.\qedhere
\]
(c) Assume that $M$ is uniformly continuous and that for all $a\in \R$, we have $D(MP_a)\cap D(M)$ is dense in $D(\overline{M})$, where $\overline{M}$ denotes the (well-defined, uniformly) continuous extension of $M$. Then $P_aM P_a = P_aM$ on $D(MP_a)\cap D(M)$ for all $a\in \R$ implies causality for $\overline{M}$. Indeed, it suffices to observe that both $P_aM P_a$ and $P_aM$ are uniformly continuous.
\end{Bem}

\begin{Bei}\label{bei:time-shift} For $h\in\R$ we define $\tau_h\colon C_c(\R)\subseteqq L^2(\R)\to L^2(\R), f\mapsto f(\cdot+h)$. Then it is easy to see that $\tau_h$ is causal with respect to the s.r. if and only if $h\leqq 0$. 
\end{Bei}

If $M$ is assumed to be closable, the definition of causality does \emph{not} carry over to the closure of $M$. The following example illustrates this fact.

\begin{Bei}\label{bei:time-shift2} Consider the s.r. $(L^2(\R),(P_t)_t)$. Let $\s H\coloneqq \lin\{ x\mapsto x^n e^{-\frac{x^2}{2}}; n\in\N_0\}$ be the linear span of all Hermite functions. Now, $\s H$ is dense in $L^2(\R)$, see e.g.~\cite{Wer2007}. Moreover, for any two elements $\gamma_1,\gamma_2\in \s H$ the equality $P_a\gamma_1 = P_a\gamma_2$ for some $a\in\R$ implies $\gamma_1=\gamma_2$. In consequence, every mapping $M:\s H\subseteqq L^2(\R)\to L^2(\R)$ is causal with respect to the s.r. In particular, the shift $\tau_h$ as introduced in Example \ref{bei:time-shift} defined on $\mathcal{H}$ is closable and it is causal even for $h>0$. 
\end{Bei}

As we have seen above the notion of causality is a certain compatibility notion for projections given by a resolution of identity. In order to streamline the proofs, we introduce the concept of compatibility at first.

\begin{Def} Let $X$ be a Banach space, $P\in L(X)$, $M\colon D(M)\subseteqq X\to X$. We call $M$ \emph{$P$-compatible} if for all $f,g\in D(M)$ we have $PM(f)=PM(g)$ provided that $Pf=Pg$.  
\end{Def}

\begin{Bem}\label{rem:pcomp_pcausal} Observe that if $(X,(P_t)_t)$ is a resolution space and $M\colon D(M)\subseteqq X\to X$ then $M$ is causal w.r.t. $(P_t)_t$ if and only if $M$ is $P_t$-compatible for all $t\in \R$. 
\end{Bem}

Now, we provide the following notion of strong causality, which, for closure procedures, is more adequate.

\begin{Def} Let $X$ be a Banach space, $M\colon D(M)\subseteqq X\to X$.

(a) Let $P\in L(X)$. We say that $M$ is \emph{norm-strongly $P$-compatible} (n-strongly $P$-compatible for short) if for all $R>0$, $x'\in X'$ the mapping
\begin{align*}
    \left(B_M(0,R),\abs{P(\cdot-\cdot)}\right)&\to \left(X,\abs{\langle P(\cdot -\cdot),x'\rangle}\right)\\
                                             f&\mapsto Mf
\end{align*}
is uniformly continuous, where $B_M(0,R)\coloneqq \{ f\in D(M); \abs{f}+\abs{Mf}< R\}$. 

(b) Let $(P_t)_t$ in $L(X)$ be a resolution of the identity. Then $M$ is called \emph{norm-strongly causal} (n-strongly causal), if $M$ is n-strongly $P_t$-compatible for all $t\in \R$.
\end{Def}

\begin{Bem}\label{rem:strong_caus_impl_causal}
    Note that if $M$ is n-strongly $P$-compatible then it is $P$-compatible. Indeed, let $f,g\in D(M)$, with $P(f-g)=0$ and $R\coloneqq \max\left\{\abs{f}+\abs{Mf},\abs{g}+\abs{Mg}\right\}+1$. By definition, for all $x'\in X'$ and $\eps>0$ there exists $\delta>0$ such that for all $f_1,f_2\in B_M(0,R)$ with $\abs{P\left(f_1-f_2\right)}<\delta$ we have $\abs{\langle P(Mf_1-Mf_2),x'\rangle}<\eps$. Thus, $\abs{\langle P(Mf-Mg),x'\rangle}<\eps$ for all $x'\in X'$ and $\eps>0$ implying \[P(Mf-Mg)=0.\]
\end{Bem}

In this section, we aim to show the following result:

\begin{Sa}\label{thm:causal_cont_causal2} Let $(X,(P_t)_t)$ be a resolution space, with $X$ reflexive. Let $M\colon D(M)\subseteqq X\to X$ linear and closable. Then the following statements are equivalent:
\begin{enumerate}[(i)]
 \item\label{caus_cont_caus12} $\overline{M}$ is causal;
 \item\label{caus_cont_caus22} $M$ is n-strongly causal.
\end{enumerate}
\end{Sa}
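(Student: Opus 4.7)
The plan is to prove the two implications separately. Direction (\ref{caus_cont_caus22})$\Rightarrow$(\ref{caus_cont_caus12}) is essentially a continuity / approximation argument, while the substantive content lies in (\ref{caus_cont_caus12})$\Rightarrow$(\ref{caus_cont_caus22}), which I will reduce to a contradiction argument exploiting weak sequential compactness via reflexivity. For the easy direction, fix $a \in \R$ and $f, g \in D(\overline{M})$ with $P_a(f-g) = 0$. Pick sequences $(f_n), (g_n)$ in $D(M)$ such that $f_n \to f$, $g_n \to g$, $Mf_n \to \overline{M}f$, and $Mg_n \to \overline{M}g$; for $n$ large both $f_n, g_n$ lie in $B_M(0,R)$ for a suitable $R>0$, and $\abs{P_a(f_n - g_n)} \to 0$. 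N-strong $P_a$-compatibility of $M$ then gives $\langle P_a(Mf_n - Mg_n), x'\rangle \to 0$ for every $x' \in X'$, while norm-continuity of $P_a$ yields $P_a(Mf_n - Mg_n) \to P_a(\overline{M}f - \overline{M}g)$ in norm; hence $P_a(\overline{M}f - \overline{M}g)=0$.

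For the hard direction, fix $a \in \R$. By linearity of $M$, n-strong $P_a$-compatibility is equivalent to the statement: for every $R, \eps > 0$ and every $x' \in X'$, there exists $\delta > 0$ such that
\[
  h \in D(M),\ \abs{h}+\abs{Mh}<R,\ \abs{P_a h}<\delta \ \Longrightarrow\ \abs{\langle P_a Mh, x'\rangle}<\eps.
\]
Assume this fails; choose $\eps_0 > 0$, $x' \in X'$, $R > 0$, and a sequence $(h_n)$ in $D(M)$ with $\abs{h_n}+\abs{Mh_n}<R$, $\abs{P_a h_n} \to 0$, and yet $\abs{\langle P_a Mh_n, x'\rangle} \geq \eps_0$ for all $n$. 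Since $X$ is reflexive and both $(h_n)$ and $(Mh_n)$ are bounded, after passing to a subsequence we may assume $h_n \rightharpoonup h$ and $Mh_n \rightharpoonup y$ in $X$. The graph of the closed linear operator $\overline{M}$ is norm-closed and convex in $X \times X$, hence weakly closed; therefore $h \in D(\overline{M})$ and $\overline{M}h = y$. Weak continuity of $P_a$ combined with $P_a h_n \to 0$ in norm forces $P_a h = 0$, and causality of $\overline{M}$ then yields $P_a y = P_a \overline{M}h = 0$. Composing $Mh_n \rightharpoonup y$ with the bounded linear functional $z \mapsto \langle P_a z, x'\rangle$ produces $\langle P_a Mh_n, x'\rangle \to \langle P_a y, x'\rangle = 0$, contradicting the lower bound $\eps_0$.

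The main obstacle is the identification of the weak limit $y$ with $\overline{M}h$; this is precisely where reflexivity of $X$ is essential, since it supplies weak sequential compactness of the bounded sequence $(h_n, Mh_n)$ in $X \times X$, after which weak closedness of $\graph(\overline{M})$ places the limit pair in the graph so that causality of $\overline{M}$ becomes applicable. Without this step one would have no control over $y$, and causality of $\overline{M}$ could not be invoked to close the contradiction.
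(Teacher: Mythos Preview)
Your proof is correct and follows essentially the same approach as the paper: the paper factors the argument through two auxiliary lemmas (one showing that for weakly closed operators $P$-compatibility and n-strong $P$-compatibility coincide via the same reflexivity/weak-compactness contradiction you use, and one transferring n-strong compatibility between $M$ and $\overline{M}$ by density in graph norm), whereas you inline both steps directly into the two implications. The mathematical content---weak sequential compactness from reflexivity, weak closedness of $\graph(\overline{M})$ as a closed convex set, and approximation by graph-norm sequences---is identical, with your version slightly streamlined by exploiting linearity to reduce pairs $(f_n,g_n)$ to a single sequence $(h_n)$.
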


Regarding Remark \ref{rem:pcomp_pcausal} it suffices to establish the following:

\begin{Sa}\label{thm:causal_cont_causal} Let $X$ be a reflexive Banach space, $P\in L(X)$. Let $M\colon D(M)\subseteqq X\to X$ linear and closable. Then the following statements are equivalent:
\begin{enumerate}[(i)]
 \item\label{caus_cont_caus1} $\overline{M}$ is $P$-compatible;
 \item\label{caus_cont_caus2} $M$ is n-strongly $P$-compatible.
\end{enumerate}
\end{Sa}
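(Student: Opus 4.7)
The plan is to exploit the linearity of $M$ throughout. For linear $M$, $P$-compatibility of $\overline{M}$ reduces to the statement: $f \in D(\overline{M})$ with $Pf = 0$ implies $P\overline{M}f = 0$; and n-strong $P$-compatibility, by passing to the difference $h := f_1 - f_2$, reduces to: for every $R > 0$, $x' \in X'$ and $\epsilon > 0$ there exists $\delta > 0$ such that every $h \in D(M)$ with $\abs{h} + \abs{Mh} < 2R$ and $\abs{Ph} < \delta$ satisfies $\abs{\langle PMh, x'\rangle} < \epsilon$.

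For the direction (ii)$\Rightarrow$(i), I would pick $f \in D(\overline{M})$ with $Pf = 0$ together with a sequence $(f_n)$ in $D(M)$ such that $f_n \to f$ and $Mf_n \to \overline{M}f$. Since $\sup_n(\abs{f_n} + \abs{Mf_n}) < \infty$ and $\abs{Pf_n} \to 0$, applying n-strong $P$-compatibility to the pair $(f_n, 0)$ (which lies in $B_M(0,R)^2$ for $R$ sufficiently large) yields $\langle PMf_n, x'\rangle \to 0$ for each $x' \in X'$. Combined with $PMf_n \to P\overline{M}f$ in norm, this forces $\langle P\overline{M}f, x'\rangle = 0$ for all $x'$, hence $P\overline{M}f = 0$.

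For (i)$\Rightarrow$(ii), I would argue by contradiction: assume there exist $R > 0$, $x' \in X'$, $\epsilon > 0$ and sequences $(f_n), (g_n)$ in $B_M(0,R)$ with $\abs{P(f_n - g_n)} \to 0$ but $\abs{\langle P(Mf_n - Mg_n), x'\rangle} \geq \epsilon$. Set $h_n := f_n - g_n$, so that both $(h_n)$ and $(Mh_n)$ are norm-bounded by $2R$. By reflexivity of $X$ (hence of $X \times X$) a subsequence satisfies $h_n \rightharpoonup h$ and $Mh_n \rightharpoonup y$. The graph of $M$ is a linear, in particular convex, subspace of $X \times X$, so Mazur's theorem identifies its weak closure with its norm closure, namely the graph of $\overline{M}$; hence $h \in D(\overline{M})$ with $\overline{M}h = y$. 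Continuity of $P$ gives $Ph_n \rightharpoonup Ph$, and since $Ph_n \to 0$ in norm we obtain $Ph = 0$. Then $P$-compatibility of $\overline{M}$ yields $Py = P\overline{M}h = 0$. But $PMh_n \rightharpoonup Py = 0$, so $\langle PMh_n, x'\rangle \to 0$, contradicting $\abs{\langle PMh_n, x'\rangle} \geq \epsilon$ for all $n$.

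The main obstacle is the identification of the weak limit $(h, y)$ as a point of the graph of $\overline{M}$: this is exactly where reflexivity is indispensable, since it both supplies the weakly convergent subsequence in $X \times X$ and, via Mazur, promotes norm-closedness of the graph to weak-sequential closedness. Everything else is a fairly direct bookkeeping exercise using linearity and boundedness of $P$.
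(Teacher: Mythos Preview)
Your argument is correct and follows the same strategy as the paper: the direction (i)$\Rightarrow$(ii) is the contrapositive argument of Lemma~\ref{le:equiv_of_caus_for_weakly-closed}, run against $\overline{M}$ rather than a hypothetically weakly closed $M$, with Mazur supplying the identification of weak and norm closure of the graph; the direction (ii)$\Rightarrow$(i) is the density reasoning of Lemma~\ref{le:strongly_causal_is_equiv_strong_closure} combined with Remark~\ref{rem:strong_caus_impl_causal}, collapsed into a single step by testing against the pair $(f_n,0)$. The only organisational difference is that the paper isolates these two pieces as separate lemmas (one about weakly closed operators, one about stability of n-strong compatibility under closure) and then quotes that $\overline{M}=\overline{M}^{\mathrm{w}}$ for linear $M$, whereas you merge everything into one pass; your use of $h_n=f_n-g_n$ is a small simplification over the paper's separate extraction of weak limits for $(f_n)$ and $(g_n)$, made available precisely by the linearity of $M$.
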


For the proof of the latter theorem, we need some prerequisits.

\begin{Le}\label{le:equiv_of_caus_for_weakly-closed} Let $X$ be a reflexive Banach space, $P\in L(X)$. Let $M\colon D(M)\subseteqq X\to X$ be \emph{weakly closed}, i.e., for all $( \phi_n)_n$ in $D(M)$ we have
  \[ (\phi_n)_n, (M\phi_n)_n\text{ weakly convergent}  \Rightarrow \phi\coloneqq \textnormal{w-}\lim_{n\to\infty}\phi_n\in D(M), M\phi=\textnormal{w-}\lim_{n\to\infty} M\phi_n. 
\]
Then the following assertions are equivalent:
\begin{enumerate}[(i)]
 \item\label{equiv_cau_clo1} $M$ is $P$-compatible;
 \item\label{equiv_cau_clo2} $M$ is n-strongly $P$-compatible.
\end{enumerate} 
\end{Le}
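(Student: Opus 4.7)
The direction (ii)$\Rightarrow$(i) is already contained in Remark \ref{rem:strong_caus_impl_causal}, since n-strong $P$-compatibility implies $P$-compatibility regardless of weak closedness. Hence the whole content of the lemma is the implication (i)$\Rightarrow$(ii), and my plan is to prove this by contradiction using a weak compactness argument. This is where reflexivity and the weak closedness of $M$ will combine.

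Suppose $M$ is $P$-compatible but not n-strongly $P$-compatible. Then there exist $R>0$, $x'\in X'$, and $\eps>0$ such that for every $n\in\N$ we can find $f_n,g_n\in B_M(0,R)$ with $\abs{P(f_n-g_n)}<1/n$ while $\abs{\scp{P(Mf_n-Mg_n)}{x'}}\geqq\eps$. By definition of $B_M(0,R)$ the sequences $(f_n)_n$, $(Mf_n)_n$, $(g_n)_n$, $(Mg_n)_n$ are all bounded in $X$. Since $X$ is reflexive, a standard diagonal extraction yields a subsequence (not relabeled) along which
\[
 f_n\rightharpoonup f,\quad Mf_n\rightharpoonup u,\quad g_n\rightharpoonup g,\quad Mg_n\rightharpoonup v
\]
for some $f,g,u,v\in X$. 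The weak closedness of $M$ then forces $f,g\in D(M)$ with $Mf=u$ and $Mg=v$.

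Next I would exploit the (norm-to-weak) continuity of $P\in L(X)$: since $P(f_n-g_n)\to 0$ in norm we also have $P(f_n-g_n)\rightharpoonup 0$, but on the other hand $P(f_n-g_n)\rightharpoonup P(f-g)$ by weak continuity of $P$. Thus $Pf=Pg$. Applying the assumed $P$-compatibility of $M$ gives $PMf=PMg$, i.e.\ $Pu=Pv$. Now, writing $\scp{P(Mf_n-Mg_n)}{x'}=\scp{Mf_n-Mg_n}{P'x'}$ and passing to the limit along our subsequence yields
\[
 \lim_{n\to\infty}\scp{P(Mf_n-Mg_n)}{x'}=\scp{u-v}{P'x'}=\scp{P(u-v)}{x'}=0,
\]
which contradicts $\abs{\scp{P(Mf_n-Mg_n)}{x'}}\geqq\eps$ for all $n$. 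Hence $M$ must be n-strongly $P$-compatible.

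The main subtlety lies not in the computation but in securing, simultaneously, weak limits of the four sequences and identifying the limit points with elements of $\graph M$: this is precisely what reflexivity (for extracting subsequences from bounded sequences) and weak closedness of $M$ (to conclude $f,g\in D(M)$ and $Mf=u$, $Mg=v$) provide. Once this is in place, the weak continuity of $P$ closes the argument in one line.
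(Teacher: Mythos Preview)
Your proof is correct and follows essentially the same route as the paper: contrapositive/contradiction, extraction of weakly convergent subsequences from the four bounded sequences via reflexivity, identification of the limits via weak closedness of $M$, and then passing to the limit in the functional $\scp{P(\cdot)}{x'}$ to reach a contradiction. The only cosmetic differences are that the paper phrases it as a pure contrapositive and concludes $P(f-g)=0$ via weak lower semicontinuity of the norm rather than by comparing two weak limits, but the substance is identical.
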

\begin{proof}
 In Remark \ref{rem:strong_caus_impl_causal}, we have seen that \eqref{equiv_cau_clo2} implies \eqref{equiv_cau_clo1}. For the sufficiency of \eqref{equiv_cau_clo1} for \eqref{equiv_cau_clo2}, we show that $M$ is not $P$-compatible provided that $M$ is not n-strongly $P$-compatible. For this, let $R>0$, $x'\in X'$ and $\eps>0$ such that for all $n\in\N$ there are $f_n,g_n\in B_M(0,R)$ with 
\[
   \abs{P\left(f_n-g_n\right)}<\frac{1}{n}\text{ and }\abs{\langle P(Mf_n-Mg_n),x'\rangle}\geqq \eps.
\]
By boundedness of $(f_n)_n$, $(g_n)_n$, $(Mf_n)_n$ and $(Mg_n)_n$ and reflexivity of $X$, there exists a subsequence $(n_k)_k$ of $(n)_n$ such that $(f_{n_k})_k$, $(g_{n_k})_k$, $(Mf_{n_k})_k$ and $(Mg_{n_k})_k$ weakly converge. Denote the respective limits by $f,g,h_f,h_g$. With the help of the weak closedness of $M$ we deduce that $f,g\in D(M)$ and $h_f=Mf$ and $h_g=Mg$. By (weak) continuity of $P$ we get 
\[
  \abs{P(f-g)}\leqq \liminf_{k\to\infty} \abs{P(f_{n_k}-g_{n_k})}=0.
\]
 Now, from 
\[
 \abs{\langle P(Mf-Mg),x'\rangle}=\lim_{k\to\infty}\abs{\langle P(Mf_{n_k}-Mg_{n_k}),x'\rangle}\geqq \eps
\]
we read off that $M$ is not $P$-compatible.
\end{proof}

\begin{Le}\label{le:strongly_causal_is_equiv_strong_closure} Let $(X,(P_t)_t)$ be a resolution space, $M\colon D(M)\subseteqq X\to X$ closable. Then the following statements are equivalent.
\begin{enumerate}[(i)]
 \item\label{strw1} $M$ is n-strongly $P$-compatible;
 \item\label{strw2} $\overline{M}$ is n-strongly $P$-compatible.
\end{enumerate} 
\end{Le}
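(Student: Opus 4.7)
The plan is a direct approximation argument; no reflexivity of $X$ is needed and Lemma \ref{le:equiv_of_caus_for_weakly-closed} is not invoked. I read the statement as concerning a single bounded projection $P\in L(X)$, the extension to all $P_t$ (and hence to n-strong causality) being automatic.

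The implication \eqref{strw2} $\Rightarrow$ \eqref{strw1} is immediate: since $\overline{M}$ extends $M$, we have $B_M(0,R)\subseteq B_{\overline{M}}(0,R)$ and $Mf=\overline{M}f$ for $f\in D(M)$, so the restriction of the uniformly continuous map $f\mapsto \overline{M}f$ to $B_M(0,R)$ remains uniformly continuous.

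For \eqref{strw1} $\Rightarrow$ \eqref{strw2}, I would fix $R>0$, $x'\in X'$, and $\eps>0$, apply the hypothesis at the slightly enlarged radius $R':=R+1$ and tolerance $\eps/2$ to obtain some $\delta>0$, and then argue that $\tilde\delta:=\delta/2$ witnesses uniform continuity of $\overline{M}$ on $B_{\overline{M}}(0,R)$. Given $f,g\in B_{\overline{M}}(0,R)$ with $\abs{P(f-g)}<\tilde\delta$, I would choose approximating sequences $(f_n)_n,(g_n)_n\subseteq D(M)$ with $f_n\to f$, $g_n\to g$, $Mf_n\to\overline{M}f$, and $Mg_n\to\overline{M}g$ in norm. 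For $n$ sufficiently large the approximants lie in $B_M(0,R')$, since $\abs{f_n}+\abs{Mf_n}\to\abs{f}+\abs{\overline{M}f}<R<R'$, and satisfy $\abs{P(f_n-g_n)}<\delta$ by continuity of $P$; the hypothesis then yields $\abs{\langle P(Mf_n-Mg_n),x'\rangle}<\eps/2$, and passing to the limit via norm-continuity of $P$ and of $x'$ gives $\abs{\langle P(\overline{M}f-\overline{M}g),x'\rangle}\leqq \eps/2<\eps$, as required.

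The argument is essentially routine bookkeeping; the only subtle point is that $B_M(0,R)$ is defined by the strict inequality $\abs{f}+\abs{Mf}<R$, which forces the slight enlargement $R\mapsto R'$ so that the norm-convergent approximants eventually lie inside $B_M(0,R')$. No deeper obstacle is anticipated.
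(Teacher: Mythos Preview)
Your proposal is correct and follows essentially the same route as the paper: both show \eqref{strw1}$\Rightarrow$\eqref{strw2} by approximating elements of $B_{\overline{M}}(0,R)$ in graph norm by elements of $D(M)$ and passing to the limit, while \eqref{strw2}$\Rightarrow$\eqref{strw1} is trivial restriction. The paper packages this as ``$B_M(0,R)$ is dense in $B_{\overline{M}}(0,R)$ with respect to $\abs{P(\cdot-\cdot)}$, hence uniform continuity on the dense subset extends''; you unfold the same argument explicitly. One minor remark: your enlargement $R\mapsto R'=R+1$ is harmless but unnecessary---precisely because the ball is defined by a \emph{strict} inequality, $\abs{f}+\abs{\overline{M}f}<R$ already guarantees that graph-norm approximants $f_n$ eventually satisfy $\abs{f_n}+\abs{Mf_n}<R$, so they land in $B_M(0,R)$ for the same $R$ (this is exactly the density claim the paper states).
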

\begin{proof}
Let $R>0$. Then $B_M(0,R)$ is dense in $B_{\overline{M}}(0,R)$ with respect to $\abs{P(\cdot-\cdot)}$. Indeed, for $\eps>0$, $f\in B_{\overline{M}}(0,R)$ there exists $g\in B_M(0,R)$ such that 
\[
 \abs{f-g}+\abs{\overline{M}f-\overline{M}g}<\eps. 
\]
In particular, we have $\abs{P(f-g)}\leqq \Abs{P}\eps$. Assuming the validity of \eqref{strw1}, we see that
\[
   (B_{\overline{M}}(0,R),\abs{P(\cdot-\cdot)})\to (X,\abs{\langle P (\cdot-\cdot),x'\rangle}), f\mapsto \overline{M}f
\]
is uniformly continuous on a dense subset for all $x'\in X'$. This implies \eqref{strw2}. The converse is trivial.
\end{proof}


\begin{proof}[Proof of Theorem \ref{thm:causal_cont_causal}]
 The assertion follows from the Lemmas \ref{le:equiv_of_caus_for_weakly-closed} and \ref{le:strongly_causal_is_equiv_strong_closure} together with the fact that for linear operators the weak closure coincides with the strong closure. Indeed, we have
\begin{multline*}
  M \text{ n-strongly $P$-compatible }\Leftrightarrow \overline{M}=\overline{M}^{\textnormal{w}} \text{ n-strongly $P$-compatible} \\ \Leftrightarrow \overline{M}^{\textnormal{w}}=\overline{M} \text{ $P$-compatible.}\qedhere
\end{multline*}
\end{proof}

\section{The non-reflexive case}\label{sec:nonrefl}

The idea to treat the non-reflexive case is to use dual pairs. We have the draw-back to only be able to treat the continuous operator case. Therefore, we allow the operator $M$ to have predomain and target spaces differing from one another. As a consequence, the notion presented becomes a bit more technical. At the end of this section, we shall sketch the connections between the notions presented. We start out with a definition.

\begin{Def} Let $X,Y$ be Banach spaces, $P\in L(X),Q\in L(Y)$. Let $X_1,Y_1$ be vector spaces and such that $\langle X,X_1\rangle$ and $\langle Y,Y_1\rangle$  become separating dual pairs. Let $M\colon D(M)\subseteqq X\to Y$. 

(a) $M$ is called \emph{$P$-$Q$-compatible}, if for all $f,g\in D(M)$ the equality $Q(M(f)-M(g))=0$ is necessary for $Pf=Pg$.

(b) $M$ is called \emph{$\sigma(X,X_1)$-$\sigma(Y,Y_1)$-strongly $P$-$Q$-compatible}, if for all $y_1\in Y_1$ the mapping
\begin{align*}
     \left(B_M(0,R),\tau_P\right)&\to \left(Y,\abs{\langle Q(\cdot-\cdot),y_1}\right) \\
                               f&\mapsto Mf, 
\end{align*}
is uniformly continuous, where $\tau_P$ is the relative topology on $B_M(0,R)$ induced by the mapping $X\ni x\mapsto Px\in (X,\sigma(X,X_1))$. Note that the latter topology is a linear topology, which in particular implies that it yields a uniform space given by the neighbourhoods of zero. 
\end{Def}

\begin{Bem} (a) If $(X,(P_t)_t)$ and $(Y,(Q_t)_t)$ are resolution spaces, then, in the above situation, we define what it means for a mapping to be causal with respect to $(Q_t)_t$-$(P_t)_t$ in a canonical way, i.e., $M$ is \emph{causal} (\emph{$\sigma(X,X_1)$-$\sigma(Y,Y_1)$-strongly causal}) if for all $t\in \R$ we have that $M$ is $P_t$-$Q_t$-compatible ($\sigma(X,X_1)$-$\sigma(Y,Y_1)$-strongly $P_t$-$Q_t$-compatible).

(b) In the previous section, for sake of presentation, we used $P=Q$ and $X=Y$, but note that the results still hold, if one replaces the target space by another resolution space $(Y,(Q_t)_t)$, with $Y$ reflexive, and define the corresponding notion of n-strong $P$-$Q$-causality. 
\end{Bem}

\begin{Bem}\label{rem:s2impl_caus} (a) Again, we verify that $P$-$Q$-compatibility is necessary for $\sigma(X,X_1)$-$\sigma(Y,Y_1)$-strongly $P$-$Q$-compatibility. For this, let $f,g\in D(M)$ with $P(f-g)=0$ and $R\coloneqq \max\{ \abs{f}+\abs{Mf},\abs{g}+\abs{Mg}\}+1$. By definition, for all $y_1\in Y_1$ and $\eps>0$ there exists a zero neighbourhood $U$ in $\tau_P$ such that if $f_1-f_2\in U$ we have $\abs{\langle Q(Mf_1-Mf_2),x_1\rangle}<\eps$. Thus, $\abs{\langle Q (Mf-Mg),y_1\rangle}<\eps$ for all $y_1\in Y_1$ and $\eps>0$, which implies $\langle Q(Mf-Mg),y_1\rangle=0$ for all $y_1\in Y_1$. Since $\langle Y,Y_1\rangle$ is separating, we deduce that $Q(Mf-Mg)=0$.

(b) Recall from Remark \ref{rem:sec1,linm} (a) that, if $M$ is linear, then $M$ is $P$-$Q$-compatible if and only if $Pf=0$ implies $QMf=0$ for all $f\in D(M)$, which in turn is equivalent to $N(P)\subseteqq N(QM)$, i.e., the nullspace of $P$ is contained in the one of $QM$.
 \end{Bem}

In this section we shall prove the following result:

\begin{Sa}\label{thm:caus_non-refl} Let $X,Y$ be Banach spaces, $P\in L(X),Q\in L(Y)$ with $P^2=P$. Let $M\colon D(M)\subseteqq X\to Y$ be densely defined, linear and continuous. Then the following statements are equivalent:
\ben
 \item\label{non-refl1} $\overline{M}$ is $P$-$Q$-compatible;
 \item\label{non-refl2} $M$ is $\sigma(X,X')$-$\sigma(Y,Y')$-strongly $P$-$Q$-compatible.
\een
\end{Sa}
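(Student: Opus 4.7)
Plan: I will follow the blueprint of the reflexive case (Theorem~\ref{thm:causal_cont_causal}) but replace the weak--limit/reflexivity argument by a dualisation--and--factorisation argument. The new key ingredient is that the hypothesis $P^{2}=P$ yields the identity $N(P)^{\perp}=\range(P^{*})$, which lets me convert the annihilation condition coming from \eqref{non-refl1} into a pointwise factorisation through $P$ of the relevant linear functional driving the uniform continuity estimate.

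For \eqref{non-refl2}$\Rightarrow$\eqref{non-refl1}, I first transfer $\sigma(X,X')$-$\sigma(Y,Y')$-strong $P$-$Q$-compatibility from $M$ to $\overline{M}$. Since $M$ is densely defined, linear and continuous, $\overline{M}\colon X\to Y$ is its bounded linear extension, and for each $R>0$ the set $B_M(0,R)$ is norm-dense in $B_{\overline{M}}(0,R)$: picking $f_n\in D(M)$ with $f_n\to f\in B_{\overline{M}}(0,R)$ one has $Mf_n\to\overline{M}f$ and hence $\abs{f_n}+\abs{Mf_n}<R$ eventually. A routine three-$\eps$ argument, using the norm-continuity of $P,Q$ and $\overline{M}$, extends the uniform continuity from $B_M(0,R)$ to $B_{\overline{M}}(0,R)$. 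Remark~\ref{rem:s2impl_caus}(a) applied to $\overline{M}$ then yields \eqref{non-refl1}.

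For the converse, fix $y'\in Y'$ and set $x_0'\coloneqq \overline{M}^{*}Q^{*}y'\in X'$, so that $\langle x,x_0'\rangle=\langle Q\overline{M}x,y'\rangle$ for all $x\in X$. By Remark~\ref{rem:s2impl_caus}(b) applied to the bounded linear operator $\overline{M}$, assumption \eqref{non-refl1} amounts to $N(P)\subseteqq N(Q\overline{M})$, i.e., $x_0'\in N(P)^{\perp}$. Invoking $P^{2}=P$, the adjoint $P^{*}\in L(X')$ is again a bounded projection, and $\range(P^{*})=N(P)^{\perp}$: the inclusion $\subseteqq$ is immediate, and conversely for $\phi\in N(P)^{\perp}$ the vector $(I-P)x$ lies in $N(P)$ for every $x\in X$, whence $\phi(x)=\phi(Px)=(P^{*}\phi)(x)$, so $\phi=P^{*}\phi\in \range(P^{*})$. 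Writing $x_0'=P^{*}z'$ for some $z'\in X'$, I obtain the crucial identity
\[
\langle Q(Mf_1-Mf_2),y'\rangle=\langle f_1-f_2,P^{*}z'\rangle=\langle P(f_1-f_2),z'\rangle\qquad (f_1,f_2\in D(M)).
\]

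Given $R>0$ and $\eps>0$, the $\tau_P$-zero neighbourhood $U\coloneqq \{x\in X:\abs{\langle Px,z'\rangle}<\eps\}$ then directly witnesses uniform continuity of $f\mapsto Mf$ from $(B_M(0,R),\tau_P)$ into $(Y,\abs{\langle Q(\cdot-\cdot),y'\rangle})$; since $y'$ and $R$ were arbitrary, $M$ is $\sigma(X,X')$-$\sigma(Y,Y')$-strongly $P$-$Q$-compatible. I expect the main obstacle to be the factorisation step $x_0'=P^{*}z'$: without the projection hypothesis $P^{2}=P$ the identity $N(P)^{\perp}=\range(P^{*})$ fails in general, so although $x_0'\in N(P)^{\perp}$ would still follow from \eqref{non-refl1}, one could no longer distil it into a single functional acting through $P$, and the pointwise identity driving the uniform continuity estimate would be lost.
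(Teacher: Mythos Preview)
Your proof is correct, and for the direction \eqref{non-refl1}$\Rightarrow$\eqref{non-refl2} it takes a genuinely different route from the paper. The paper lifts everything to the bidual: it uses Corollary~\ref{cor:pq-p##q##-compat} to deduce that $\overline{M}''$ is $P''$-$Q''$-compatible, then invokes Lemma~\ref{lem:caus_w*_cont} (a weak-$*$ compactness/closedness argument on $X''\times Y''$) to obtain $\sigma(X'',X')$-$\sigma(Y'',Y')$-strong $P''$-$Q''$-compatibility of $\overline{M}''$, and finally restricts $\tau_{P''}$ back to $X$. You instead stay in $X$ and $X'$ throughout: the identification $N(P)^{\perp}=\range(P^{*})$ (which is exactly where $P^{2}=P$ enters for you) lets you write $\overline{M}^{*}Q^{*}y'=P^{*}z'$ directly, collapsing the uniform-continuity requirement into the explicit identity $\langle Q(Mf_{1}-Mf_{2}),y'\rangle=\langle P(f_{1}-f_{2}),z'\rangle$. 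This is shorter and more elementary---no bidual, no nets, no compactness---and actually yields more: the map in question is not just uniformly continuous but isometric for suitable seminorms. The paper's route, on the other hand, is structurally parallel to the reflexive argument (Lemma~\ref{le:equiv_of_caus_for_weakly-closed}) and highlights the role of weak-$*$ closedness, which may be conceptually useful elsewhere.

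For \eqref{non-refl2}$\Rightarrow$\eqref{non-refl1} you are in fact a bit more careful than the paper, which simply cites Remark~\ref{rem:s2impl_caus}; that remark, read literally, gives $P$-$Q$-compatibility of $M$ rather than of $\overline{M}$. Your density-plus-three-$\eps$ transfer of strong compatibility from $B_{M}(0,R)$ to $B_{\overline{M}}(0,R)$ (the analogue of Lemma~\ref{le:strongly_causal_is_equiv_strong_closure}) closes this cleanly before applying the remark to $\overline{M}$.
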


In order to proceed similarly as in the previous section, we will need a little more on functional analysis, we refer to \cite{voigt2014,schaefer1971} as general references. At first, we state the following variant of Lemma \ref{le:equiv_of_caus_for_weakly-closed}.

\begin{Le}\label{lem:caus_w*_cont} Let $X,Y$ be Banach spaces, $P\in L(X),Q\in L(Y)$. Let $M\colon D(M)\subseteqq X'\to Y'$ be $\sigma(X',X)$-$\sigma(Y',Y)$ closed. Then the following statements are equivalent:
\ben
\item\label{w+1} $M$ is $P'$-$Q'$-compatible;
\item\label{w+2} $M$ is $\sigma(X',X)$-$\sigma(Y',Y)$-strongly $P'$-$Q'$-compatible.
\een 
 \end{Le}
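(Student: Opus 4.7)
The plan is to mirror the proof of Lemma \ref{le:equiv_of_caus_for_weakly-closed} as closely as possible, replacing the weak sequential compactness (available from reflexivity) by Banach--Alaoglu weak-$\ast$ compactness. The price is that the argument has to be phrased with \emph{nets} rather than sequences, and that the separating property of $\langle X',X\rangle$ and $\langle Y',Y\rangle$ has to replace the automatic reflexive duality.

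The direction \eqref{w+2} $\Rightarrow$ \eqref{w+1} is a direct instance of Remark \ref{rem:s2impl_caus}(a), applied to the separating dual pairs $\langle X',X\rangle$, $\langle Y',Y\rangle$ with the operators $P'$, $Q'$. For the converse I argue by contraposition. Assuming $M$ is not $\sigma(X',X)$-$\sigma(Y',Y)$-strongly $P'$-$Q'$-compatible, there exist $R>0$, $y\in Y$ and $\varepsilon>0$ such that for each basic zero neighbourhood of $\tau_{P'}$---parametrised by pairs $(F,\delta)$ with $F\subseteq X$ finite and $\delta>0$, directed by $(F_1,\delta_1)\le(F_2,\delta_2)$ iff $F_1\subseteq F_2$ and $\delta_2\le\delta_1$---there are $f_{F,\delta},g_{F,\delta}\in B_M(0,R)$ satisfying $|\langle x,P'(f_{F,\delta}-g_{F,\delta})\rangle|<\delta$ for every $x\in F$ while
\[
\bigl|\langle y,Q'(Mf_{F,\delta}-Mg_{F,\delta})\rangle\bigr|\ge \varepsilon.
\]
This yields four norm-bounded nets in $X'$ and $Y'$; Banach--Alaoglu together with Tychonoff lets me pass to a common subnet along which $f_\alpha\xrightarrow{w^\ast}f$, $g_\alpha\xrightarrow{w^\ast}g$ in $X'$ and $Mf_\alpha\xrightarrow{w^\ast}h_f$, $Mg_\alpha\xrightarrow{w^\ast}h_g$ in $Y'$.

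By the $\sigma(X',X)$-$\sigma(Y',Y)$-closedness of $M$, $f,g\in D(M)$ with $h_f=Mf$ and $h_g=Mg$. For any fixed $x\in X$ the indices with $x\in F$ and $\delta$ arbitrarily small are cofinal, so $\langle Px,f_\alpha-g_\alpha\rangle=\langle x,P'(f_\alpha-g_\alpha)\rangle\to 0$; the weak-$\ast$ convergence of $f_\alpha-g_\alpha$ to $f-g$ upgrades this to $\langle x,P'(f-g)\rangle=0$, and since $x\in X$ was arbitrary and $\langle X',X\rangle$ separates $X'$ one concludes $P'(f-g)=0$. On the other hand, $\langle Qy,Mf_\alpha-Mg_\alpha\rangle\to\langle Qy,Mf-Mg\rangle$ forces $|\langle y,Q'(Mf-Mg)\rangle|\ge\varepsilon>0$, hence $Q'(Mf-Mg)\ne 0$, contradicting \eqref{w+1}.

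The principal obstacle, relative to the reflexive Lemma \ref{le:equiv_of_caus_for_weakly-closed}, is that without separability of $X$ one cannot extract a weak-$\ast$ convergent \emph{subsequence}, so the entire compactness argument must be carried out with nets indexed by a directed set of basic zero neighbourhoods of $\tau_{P'}$; one has to check carefully that each test functional $x\in X$ is eventually captured by the index set so that the limit $P'(f-g)$ can be evaluated against it. Once the net is set up, the $\sigma(X',X)$-continuity of the adjoint $P'$ (and analogously of $Q'$), the norm bound coming from $B_M(0,R)$, and the weak-$\ast$ closedness hypothesis combine to deliver the contradiction essentially for free.
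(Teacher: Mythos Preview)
Your proof is correct and follows essentially the same route as the paper's: both argue by contraposition, index the failure of uniform continuity by a basis of $\tau_{P'}$-neighbourhoods given by finite subsets of $X$, extract a weak-$\ast$ cluster point via Banach--Alaoglu, and use the $\sigma(X',X)$-$\sigma(Y',Y)$-closedness of $M$ to produce $f,g\in D(M)$ with $P'(f-g)=0$ but $Q'(Mf-Mg)\neq 0$. The only cosmetic differences are that the paper indexes the neighbourhood basis by finite sets $F$ alone (with the fixed bound $1$) and speaks of an accumulation point rather than an explicit subnet, whereas you use pairs $(F,\delta)$ and pass to a subnet; your direct verification that $\langle x,P'(f-g)\rangle=0$ for each $x$ is arguably a bit cleaner than the paper's appeal to $\bigcap_\alpha U_\alpha=N(P')$.
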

\begin{proof} In Remark \ref{rem:s2impl_caus}, we have seen that \eqref{w+2} implies \eqref{w+1}. Now, assume that \eqref{w+2} is not true. Then there exists $R>0$, $y\in Y$, $\eps>0$ and a net of a zero neighbourhoods $(U_\alpha)_{\alpha\in I}$ in $\tau_{P'}$ with the following properties $\{ U_\alpha; \alpha\in I\}$ consitutes a zero neighbourhood basis, $\bigcap_\alpha  U_\alpha=N(P')$, $(U_\alpha)_{\alpha\in I}$ is decreasing with respect to the direction of $I$\footnote{A possible construction is to take $I\coloneqq \{ F\subseteqq X; F \text{ finite}\}$ with ``$\subseteqq$'' as partial order. For $F\in I$ let 
\[
  U_F \coloneqq \{ x'\in X'; \max_{x\in F}\abs{\langle P'x',x\rangle}\leqq 1\}.
\]
Then $\bigcap_{F\in I} U_F \supseteqq N(P')$. On the other hand, if $x'\in X'\setminus N(P')$ then there exists $x\in X$ such that $\langle x,P'x'\rangle = 2$ and, hence, $x'\notin U_{\{x\}}$.} 
 and such that for any $\alpha\in I$ there exists $f_\alpha,g_\alpha\in B_M(0,R)$ with the property that
\[
   f_\alpha-g_\alpha \in U_\alpha \text{ and }\abs{\langle Q'(Mf_\alpha-Mg_\alpha),y\rangle}\geqq \eps.
\]
   By the boundedness of $(f_\alpha)_\alpha$, $(g_\alpha)_\alpha$, $(Mf_\alpha)_\alpha$ and $(Mg_\alpha)_\alpha$ there exists a $\sigma(X'\times X'\times Y'\times Y',X\times X\times Y\times Y)$-accumulation point $(f,g,h_f,h_g)$. The closedness of $M$ implies $f,g\in D(M)$ and $Mf=h_f$ and $Mg=h_g$. Now, as $f-g \in U_\alpha$ for $\alpha$ belonging to an infinite directed subset of $I$, we deduce from $\bigcap_\alpha  U_\alpha=N(P')$ that $f-g\in N(P')$. From $\abs{\langle Q'(Mf_\alpha-Mg_\alpha),y\rangle}\geqq \eps$ for all $\alpha$ it follows that $Q'(Mf-Mg)\neq 0$, which implies that $M$ is not $P'$-$Q'$-compatible.
\end{proof}

Before we come to the proof of Theorem \ref{thm:caus_non-refl}, we recall some general Banach space theory, which might be interesting on its own right. For convenience, we state the results with the respective proofs.

\begin{Le}\label{lem:nullspace_w*_closed} Let $X,Y$ be Banach spaces, $P\in L(X,Y)$. Then $N(P'')$ is $\sigma(X'',X')$-closed. 
\end{Le}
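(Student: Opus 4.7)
The plan is to reduce the assertion to the standard fact that the adjoint of a bounded linear operator is continuous between the respective weak-$\ast$ topologies. Indeed, $P'' = (P')'$ is by definition the (Banach space) adjoint of the bounded linear operator $P'\in L(Y',X')$. Hence, applying the general principle to $T\coloneqq P'$, the adjoint $T' = P''$ maps $X''$ into $Y''$ and is $\sigma(X'',X')$-$\sigma(Y'',Y')$ continuous.

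For completeness I would verify this continuity by a one-line net argument: if $(\xi_\alpha)_\alpha$ is a net in $X''$ converging to $\xi$ in $\sigma(X'',X')$, then for every $y'\in Y'$
\[
\langle P''\xi_\alpha, y'\rangle = \langle \xi_\alpha, P'y'\rangle \to \langle \xi, P'y'\rangle = \langle P''\xi, y'\rangle,
\]
since $P'y'\in X'$. This is the only input that uses anything about $P''$.

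Once weak-$\ast$-weak-$\ast$ continuity of $P''$ is at hand, the conclusion is immediate: the singleton $\{0\}\subseteq Y''$ is $\sigma(Y'',Y')$-closed because $Y'$ is separating and thus $\sigma(Y'',Y')$ is Hausdorff. Therefore
\[
 N(P'') = (P'')^{-1}\bigl(\{0\}\bigr)
\]
is $\sigma(X'',X')$-closed as the preimage of a closed set under a continuous map.

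I do not expect any serious obstacle; the only thing to be careful about is not confusing the two pairings involved, namely $\langle X'',X'\rangle$ on the source side and $\langle Y'',Y'\rangle$ on the target side. Beyond that, the statement is a direct instance of the textbook duality principle that bounded operators between Banach spaces correspond (via adjoints) to weak-$\ast$-weak-$\ast$ continuous operators between the duals (cf.\ \cite{voigt2014,schaefer1971}).
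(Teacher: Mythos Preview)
Your proof is correct and essentially the same as the paper's: both rely on the identity $\langle P''\xi,y'\rangle=\langle \xi,P'y'\rangle$ applied to a convergent net. The paper merely skips the intermediate step of formulating $\sigma(X'',X')$-$\sigma(Y'',Y')$-continuity of $P''$ as a separate fact and instead runs the net argument directly for a net in $N(P'')$.
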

\begin{proof}
 Let $(x''_\alpha)_\alpha$ be a net in $N(P'')$, which converges in the $\sigma(X'',X')$-topology to some $x''\in X''$. Then for any $\alpha$ and $y'\in Y'$ we have 
\[
   0=\langle P''x''_\alpha,y'\rangle=\langle x''_\alpha,P'y'\rangle \stackrel{\alpha}{\to} \langle x'',P'y'\rangle =\langle P''x'',y'\rangle. 
\]
 Thus, $P''x''=0$.
\end{proof}

\begin{Le}\label{lem:nullspace_cpred_nullspacedoubleprime} Let $X,Y$ be Banach spaces, $P\in L(X,Y)$.
 
(a) Then we have $\overline{N(P)}^{\sigma(X'',X')}\subseteqq N(P'')$.

(b) If, in addition, $X=Y$ and $P^2=P$ then $R(P')$ is $\sigma(X',X)$-closed and $\overline{N(P)}^{\sigma(X'',X')}=N(P'')$
\end{Le}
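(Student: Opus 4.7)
For part (a), I transfer $Px=0$ through the canonical embedding $J_X\colon X\to X''$: for every $x\in N(P)$ and every $y'\in Y'$ one has
\[
\langle P''J_Xx,y'\rangle=\langle J_Xx,P'y'\rangle=\langle x,P'y'\rangle=\langle Px,y'\rangle=0,
\]
so $J_X(N(P))\subseteq N(P'')$. Since $N(P'')$ is $\sigma(X'',X')$-closed by Lemma~\ref{lem:nullspace_w*_closed}, the $\sigma(X'',X')$-closure of $N(P)$ (read inside $X''$ via $J_X$) stays inside $N(P'')$, which is the statement of (a).

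For the first assertion of (b), I use $P^2=P$ to conclude $(P')^2=(P^2)'=P'$, so $P'$ is a continuous projection and $R(P')=N(I-P')$. The map $P'$ is $\sigma(X',X)$-$\sigma(X',X)$-continuous as a Banach-space adjoint: if a net $(x'_\alpha)_\alpha$ converges to $x'$ in $\sigma(X',X)$, then for every $x\in X$, $\langle x,P'x'_\alpha\rangle=\langle Px,x'_\alpha\rangle\to\langle Px,x'\rangle=\langle x,P'x'\rangle$. Hence $I-P'$ is $\sigma(X',X)$-continuous, and its nullspace $R(P')$ is $\sigma(X',X)$-closed.

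For the identity $\overline{N(P)}^{\sigma(X'',X')}=N(P'')$, the inclusion $\subseteq$ is part (a). For the reverse I appeal to the bipolar theorem: since $N(P)$ is a linear subspace of $X$, its $\sigma(X'',X')$-closure in $X''$ coincides with the bipolar $N(P)^{\perp\perp}$, where $N(P)^\perp\subseteq X'$ and the second annihilator is formed inside $X''$. I then identify the two annihilators: first, $N(P)=R(I-P)$ because $P$ is a projection, and the standard identity $R(T)^\perp=N(T')$ together with $N(I-P')=R(P')$ (using again that $P'$ is a projection) yields $N(P)^\perp=R(P')$. Second, $x''\in R(P')^\perp$ iff $\langle P''x'',y'\rangle=\langle x'',P'y'\rangle=0$ for all $y'\in X'$, i.e.\ iff $x''\in N(P'')$. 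Stringing these together gives $\overline{N(P)}^{\sigma(X'',X')}=N(P)^{\perp\perp}=R(P')^\perp=N(P'')$. The only step that needs attention is the bipolar identification, where I rely on the fact that for a linear subspace the bipolar equals the plain $\sigma(X'',X')$-closure and not merely the $\sigma(X'',X')$-closed convex hull containing zero; the remainder is careful bookkeeping across the pairings $(X,X')$, $(X',X'')$ and $(X'',X')$.
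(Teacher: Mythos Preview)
Your proof is correct and close in spirit to the paper's, but your organization of part (b) is a bit cleaner in two respects worth noting. For the closedness of $R(P')$, the paper argues directly with a net $(P'y'_\alpha)_\alpha$ converging weak* and shows the limit is fixed by $P'$; you instead observe $R(P')=N(I-P')$ and use that $I-P'$, as an adjoint, is $\sigma(X',X)$-continuous, hence has weak*-closed kernel. For the equality $\overline{N(P)}^{\sigma(X'',X')}=N(P'')$, the paper proceeds by contraposition: given $x''\notin\overline{N(P)}$ it separates with some $x'\in N(P)^\circ$, then identifies $N(P)^\circ=R(P')^{\circ\circ}=\overline{R(P')}^{\sigma(X',X)}=R(P')$ (here the first assertion on closedness of $R(P')$ is actually used), and concludes $\langle x',P''x''\rangle\neq 0$. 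Your chain $N(P)^\perp=R(I-P)^\perp=N(I-P')=R(P')$ reaches the same identification purely algebraically via $R(T)^\perp=N(T')$, so your proof of the equality does not rely on the closedness statement at all; what the paper's route buys is that it makes the Hahn--Banach step explicit, whereas you absorb it into the bipolar theorem.
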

\begin{proof}
 (a) Since $P\subseteqq P''$, we have that $N(P)\subseteqq N(P'')$. Hence, the assertion follows from Lemma \ref{lem:nullspace_w*_closed}.

(b) For the first assertion, observe that with $P^2=P$, we also have $(P')^2=P'$. Take a net $(y_\alpha')_\alpha$ in $Y'$, such that $(x_\alpha')_\alpha \coloneqq (P'y_\alpha')_\alpha$ converges in $\sigma(X',X)$ to some $x'\in X'$. Hence, for all $x\in X$ and $\alpha$ we get that
\[
  \langle x',x\rangle \stackrel{\alpha}{\leftarrow}\langle P'x_\alpha,x\rangle = \langle (P')^2x_\alpha',x\rangle\\ =\langle P'x_\alpha',Px\rangle \stackrel{\alpha}{\to} \langle x',Px\rangle=\langle P'x',x\rangle.
\]
The latter implies that $x'=P'x'\in R(P')$. In order to prove the second assertion, note that in view of (a), it suffices to prove that $X''\setminus\overline{N(P)}^{\sigma(X'',X')}\subseteqq X''\setminus N(P'')$. By the Hahn-Banach theorem and the fact that $(X'',\sigma(X'',X'))'=X'$, there exists $x'\in X'$, which vanishes on $N(P)=R(P')^{\circ}$ and for which there exists $x''\in X''$ with the property that $\langle x',x''\rangle\neq 0$, where the polar ${}^\circ$ is computed with respect to the dual pair $\langle X',X\rangle$. Therefore $x'\in N(P)^{\circ}=R(P')^{\circ\circ}=\overline{R(P')}^{\sigma(X',X)}$, where the last equality follows from the bipolar theorem. Now, $R(P')$ is $\sigma(X',X)$-closed, by the first assertion of (b). Hence, $x'\in \overline{R(P')}^{\sigma(X',X)}=R(P')$. Thus, $0\neq\langle x',x''\rangle=\langle P'x',x''\rangle=\langle x',P''x''\rangle$. Hence, $x''\notin N(P'')$ as desired.
\end{proof}

\begin{Fo}\label{cor:pq-p##q##-compat} Let $X,Y$ be Banach spaces, $P\in L(X)$, $Q\in L(X,Y)$. Assume that $P^2=P$ and $N(P)\subseteqq N(Q)$. Then $N(P'')\subseteqq N(Q'')$. 
\end{Fo}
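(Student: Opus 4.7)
The plan is to combine the two preceding lemmas in a short argument, without any fresh functional analytic input.

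First, I would observe that by Lemma \ref{lem:nullspace_w*_closed} applied to $Q \in L(X,Y)$, the set $N(Q'')$ is $\sigma(X'',X')$-closed. Since $Q$ is the restriction of $Q''$ to $X \subseteq X''$ (via the canonical embedding), the hypothesis $N(P) \subseteq N(Q)$ yields $N(P) \subseteq N(Q'')$.

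Next, because $N(Q'')$ is $\sigma(X'',X')$-closed and contains $N(P)$, it must contain the $\sigma(X'',X')$-closure of $N(P)$, i.e.
\[
  \overline{N(P)}^{\sigma(X'',X')} \subseteq N(Q'').
\]
At this point the assumption $P^2 = P$ enters: Lemma \ref{lem:nullspace_cpred_nullspacedoubleprime}(b) tells us that $\overline{N(P)}^{\sigma(X'',X')} = N(P'')$, so combining the two inclusions gives $N(P'') \subseteq N(Q'')$, as required.

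There is no real obstacle; the only thing to be careful about is to apply part (b) of Lemma \ref{lem:nullspace_cpred_nullspacedoubleprime} (which requires the idempotency of $P$) rather than merely part (a), since part (a) only provides one of the two inclusions between $\overline{N(P)}^{\sigma(X'',X')}$ and $N(P'')$, and we need the reverse inclusion to identify an arbitrary element of $N(P'')$ as a weak-$*$ limit of elements of $N(P)$.
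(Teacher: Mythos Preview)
Your proof is correct and essentially identical to the paper's. The paper writes the chain $N(P'')=\overline{N(P)}^{\sigma(X'',X')}\subseteqq \overline{N(Q)}^{\sigma(X'',X')}\subseteqq N(Q'')$, invoking part (b) of Lemma~\ref{lem:nullspace_cpred_nullspacedoubleprime} for the equality and part (a) (applied to $Q$) for the final inclusion; you instead cite Lemma~\ref{lem:nullspace_w*_closed} directly for the closedness of $N(Q'')$, which is precisely how part (a) is proved anyway.
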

\begin{proof}
With the help of Lemma \ref{lem:nullspace_cpred_nullspacedoubleprime} (a) and (b), we deduce that
\[
  N(P'')=\overline{N(P)}^{\sigma(X'',X')}\subseteqq \overline{N(Q)}^{\sigma(X'',X')}\subseteqq N(Q'').\qedhere
\]
\end{proof}

Now, we come to the proof of Theorem \ref{thm:caus_non-refl}.

\begin{proof}[Proof of Theorem \ref{thm:caus_non-refl}] Assume $\mathcal{M}\coloneqq \overline{M}$ to be $P$-$Q$-compatible. As $\mathcal{M}$ is linear, by Remark \ref{rem:s2impl_caus}, we have that $N(P)\subseteqq N(Q\mathcal{M})$. Hence, by Corollary \ref{cor:pq-p##q##-compat}, we conclude that $N(P'')\subseteqq N((Q\mathcal{M})'')=N(Q''\mathcal{M}'')$. Therefore, $\mathcal{M}''$ is $P''$-$Q''$-compatible. As $\mathcal{M}''$ is defined on the whole of $X''$ and is $\sigma(X'',X')$-$\sigma(Y'',Y')$-continuous, we deduce with the help of Lemma \ref{lem:caus_w*_cont}, that $\mathcal{M}''$ is $\sigma(X'',X')$-$\sigma(Y'',Y')$-strongly $P''$-$Q''$-compatible. Hence, $\mathcal{M}$ is $\sigma(X'',X')$-$\sigma(Y'',Y')$-strongly $P''$-$Q''$-compatible. The assertion follows from the fact that the restriction of $\tau_{P''}$ (being defined as the initial topology induced by the mapping $X''\ni x''\mapsto P''x''\in (X'',\sigma(X'',X'))$) to $X$ coincides with $\tau_P$, the initial topology induced by $X\ni x\mapsto Px=P''x\in (X,\sigma(X,X'))$. The other implication has been proved already in Remark \ref{rem:s2impl_caus}. 
\end{proof}

A summary of the results obtained reads as follows:

\begin{Sa} Let $(X,(P_t)_t)$, $(Y,(Q_t)_t)$ be resolution spaces, with $X,Y$ reflexive. Let $M\colon D(M)\subseteqq X\to Y$ be densely defined, linear and continuous. Then the following assertions are equivalent:
\ben
  \item $\overline{M}$ is causal;
  \item $M$ is n-strongly causal;
  \item $M$ is $\sigma(X,X')$-$\sigma(Y,Y')$-strongly causal.
\een
\end{Sa}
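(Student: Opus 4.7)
The plan is to derive the summary theorem by combining the two main results already established, Theorem \ref{thm:causal_cont_causal2} and Theorem \ref{thm:caus_non-refl}, applied pointwise in $t\in\R$. Observe first that, by Remark \ref{rem:pcomp_pcausal} together with its evident analogues for n-strong and $\sigma$-strong causality, each of the three assertions (i)--(iii) is equivalent to the statement that the corresponding compatibility property holds with respect to $P_t$ and $Q_t$ for every $t\in\R$. Hence it suffices to prove the three equivalences at a fixed but arbitrary $t\in\R$.

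Fix such a $t$. For the equivalence (i)$\Leftrightarrow$(ii) I would invoke Theorem \ref{thm:causal_cont_causal2} in its $P$--$Q$ form, that is, for two (possibly distinct) reflexive resolution spaces. As pointed out in the first remark of Section \ref{sec:nonrefl}, the proof given for $X=Y$ carries over verbatim: Lemma \ref{le:equiv_of_caus_for_weakly-closed} and Lemma \ref{le:strongly_causal_is_equiv_strong_closure} only use reflexivity of the target space to extract weakly convergent subsequences of $(f_n)_n,(g_n)_n$ from the domain and of $(Mf_n)_n,(Mg_n)_n$ from the codomain. Since $M$ is in particular closable, the equivalence of $P_t$--$Q_t$-compatibility of $\overline{M}$ with n-strong $P_t$--$Q_t$-compatibility of $M$ follows.

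For the equivalence (i)$\Leftrightarrow$(iii) I would apply Theorem \ref{thm:caus_non-refl} with $P=P_t$ and $Q=Q_t$. The projection property of $P_t$ secures the hypothesis $P_t^2=P_t$, and the continuous, densely defined character of $M$ is assumed. The theorem then yields that $\overline{M}$ is $P_t$--$Q_t$-compatible if and only if $M$ is $\sigma(X,X')$--$\sigma(Y,Y')$-strongly $P_t$--$Q_t$-compatible. Reflexivity of $X$ and $Y$ is not needed for this step; it only enters through statement (ii).

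The remainder is pure logical chaining: both (ii) and (iii) have been shown equivalent to (i), hence to each other, and quantifying over $t\in\R$ gives the three equivalences as stated. I do not anticipate a genuine obstacle. The only point requiring care is the observation that Theorem \ref{thm:causal_cont_causal2} does extend to the two-space setting, but this is a routine inspection of the proofs of Lemmas \ref{le:equiv_of_caus_for_weakly-closed} and \ref{le:strongly_causal_is_equiv_strong_closure} with $X$ replaced by $Y$ in the codomain arguments and $P$ by $Q$ where appropriate.
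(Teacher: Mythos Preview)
Your proposal is correct and follows exactly the route the paper intends: the final theorem is stated as a summary without explicit proof, and you have spelled out the intended derivation by reducing to a fixed $t$ via Remark~\ref{rem:pcomp_pcausal}, then invoking the two-space version of Theorem~\ref{thm:causal_cont_causal2} (as sanctioned by the remark at the start of Section~\ref{sec:nonrefl}) for (i)$\Leftrightarrow$(ii) and Theorem~\ref{thm:caus_non-refl} for (i)$\Leftrightarrow$(iii). There is nothing to add.
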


\bibliographystyle{plain}

\end{document}